\newcommand{\showgrid}{}
\newcommand{\gridon}{\renewcommand{\showgrid}{\psset{subgriddiv=1,griddots=10,gridlabels=6pt}\psgrid}}
\long\def\comment#1{%
\psshadowbox[%
fillstyle=solid,fillcolor=dummycolor,linewidth=0.05%
]{\hbox to 12.70cm {\vbox{\strut {\bf Kommentar:} #1\hfill}}}%
}
\def\figref#1{Figure~\ref{#1}}
\def\heute{\number\day.~\ifcase\month\or
 J\"anner\or Februar\or M\"arz\or April\or Mai\or Juni\or
 Juli\or August\or September\or Oktober\or November\or Dezember\fi
 \space\number\year}
\def\defeq{:=}
\def\setof#1{\left\{#1\right\}}
\def\of#1{\!\left(#1\right)}
\def\pas#1{\left(#1\right)}
\def\bit{\begin{itemize}}
\def\eit{\end{itemize}}
\def\beq{\begin{equation}}
\def\eeq{\end{equation}}
\def\1{{\mathbf 1}}
\def\path{{%\mathrm 
P}}
\def\figref#1{Figure~\ref{#1}}
\def\EM#1{{\em #1\/}}
\newif\ifenglish
\newtheorem{ex}{\ifenglish Example\else Beispiel\fi}
\newtheorem{thm}{\ifenglish Theorem\else Satz\fi}
\newtheorem{lem}{Lemma}
\newtheorem{cor}{\ifenglish Corollary\else Korollar\fi}
\theoremstyle{remark}
\newtheorem{obs}{\ifenglish Observation\else Beobachtung\fi}
\def\weight{\omega}
\def\and{\wedge}
\def\myrange#1{\setof{1,\dots,k}}
\def\schurf{s}
\def\length#1{\ell\of{#1}}
\def\tableau{T}
\def\indexthis#1#2{{#1}^{(#2)}}
\def\sapadd#1#2#3{{#1}\!+^{#2}_{#3}}
\def\sappeel#1{\left.#1\right\downarrow}
\def\sappeeldown#1#2{\left.#1\right\downarrow^{#2}}
\def\sappeelup#1#2{\left.#1\right\uparrow_{(#2)}}
\def\cop{{\sl\bf cp}}
\def\coc{{\sl\bf cocp}}
\def\pointconf{configuration of (starting/ending) points (short: \cop){\gdef\pointconf{\cop}}}
\def\pointarr{circular orientation of coloured (starting/ending) points (short: \coc){\gdef\pointarr{\coc}}}
\newif\iflongversion
\begin{document}

\bibliographystyle{plain}

\title{Bijective proofs for Schur function identities}

\begin{abstract}
In \cite{gurevichPyatovSaponov:2009}, Gurevich, Pyatov and Saponov stated an
expansion for the product of two Schur functions and gave a proof based on the
Pl\"ucker relations.

Here we show that this identity is in fact a special case of a quite general
Schur function identity, which was stated and proved 
in \cite[Lemma~16]{fulmek:2001}. In \cite{fulmek:2001}, it
was used to prove bijectively Dodgson's condensation formula and the Pl\"ucker relations,
but was not paid further attention: So we take this opportunity to
make obvious the range of applicability of this identity by giving concrete examples, accompanied by many graphical
illustrations.
\end{abstract}

\author{Markus Fulmek}
\address{Fakult\"at f\"ur Mathematik, 
Nordbergstra\ss e 15, A-1090 Wien, Austria}
\email{{\tt Markus.Fulmek@Univie.Ac.At}\newline\leavevmode\indent
{\it WWW}: {\tt http://www.mat.univie.ac.at/\~{}mfulmek}
}

\date{\today}
\thanks{
Research supported by the National Research Network ``Analytic
Combinatorics and Probabilistic Number Theory'', funded by the
Austrian Science Foundation. 
}

\maketitle

\section{Introduction}
\label{sec:intro}

In \cite{gurevichPyatovSaponov:2009}, Gurevich, Pyatov and Saponov stated an
expansion for the product of two Schur functions and gave a proof based on the
Pl\"ucker relations. Here we show that this identity is in fact a special case
of a more general Schur function identity \cite[Lemma 16]{fulmek:2001}. Since
this involves a process of ``translation'' between the languages of \cite{fulmek:2001}
and \cite{gurevichPyatovSaponov:2009} which might not be self--evident, we
explain again the corresponding combinatorial constructions here. These constructions
are best conceived by pictures, so we give a lot of figures illustrating the concepts.

This paper is organized as follows: 

In Section~\ref{sec:background} we recall the basic definitions (partitions,
Young tableaux, skew Schur functions and nonintersecting lattice paths).

In Section~\ref{sec:bicoloured}, we present the central bijective construction
(recolouring of bicoloured paths in the overlays of families
of nonintersecting lattice paths corresponding to some product of skew
Schur functions) and show how this yields a quite general Schur function
identity (Theorem~\ref{lem:fulmek2}, a reformulation of \cite[Lemma 16]{fulmek:2001}).

In Section~\ref{sec:applications} we try to exhibit the broad range of
applications of Theorem~\ref{lem:fulmek2}: In particular, we show how the
identity \cite[(3.3)]{gurevichPyatovSaponov:2009} appears as (a translation of) a
special case of Theorem~\ref{lem:fulmek2}.

\section{Basic definitions}
\label{sec:background}

An infinite weakly decreasing series of nonnegative integers $\pas{\lambda_i}_{i=1}^\infty$,
where only finitely many elements are positive, is called a \EM{partition}. The largest
index $i$ for which $\lambda_i>0$ is called the \EM{length} of the partition $\lambda$ and
is denoted by $\length{\lambda}$. For convenience, we shall in most cases omit the trailing
zeroes, i.e.,
for $\length{\lambda}=r$ we simply write $\lambda=\pas{\lambda_1,\lambda_2,\dots,\lambda_r}$,
where $\lambda_1\geq\lambda_2\geq\dots\geq\lambda_r> 0$.

The {\em Ferrers diagram\/} $F_\lambda$
of $\lambda$
is an array of cells with $\length{\lambda}$ left-justified rows and $\lambda_i$
cells in row $i$.

An {\em $N$--semistandard Young tableau\/} of shape $\lambda$ is a filling of the cells
of $F_\lambda$ with integers from the set $\{1,2,\dots,N\}$, such
that the numbers filled into the cells weakly increase in rows
and strictly increase in columns.

Let ${\tableau}$ be a semistandard Young tableau and define $m({\tableau}, k)$ to be
the number of entries $k$ in ${\tableau}$. Then the weight $w({\tableau})$ of ${\tableau}$
is defined as follows:
\begin{equation*}
\weight\of{\tableau} = \prod_{k = 1}^{N} x_k^{m({\tableau}, k)}.
\end{equation*}

\EM{Schur functions}, which are irreducible general linear characters, can be combinatorially defined by means of $N$--semistandard Young tableaux (see, for instance,
\cite[Definition~4.4.1]{sagan:2000}):

\begin{equation*}
\schurf_\lambda(x_1, x_2, x_3, \dots, x_N) = \sum_{{\tableau}}\weight\of{\tableau},
\end{equation*}
where the sum is over all $N$--semistandard Young tableaux ${\tableau}$ of
shape $\lambda$. 

Consider some partition $\lambda=(\lambda_1,\dots,\lambda_r>0)$, and let 
$\mu$ be a partition such that  $\mu_i\leq\lambda_i$ for all $i\geq 1$.
The {\em skew Ferrers
diagram\/} $F_{\lambda/\mu}$ of $\lambda/\mu$
is an array of cells with $r$ left-justified rows and $\lambda_i-\mu_i$
cells in row $i$, where the first $\mu_i$ cells in row $i$ are missing.
An {\em $N$--semistandard skew Young tableau\/} of shape $\lambda/\mu$ is a filling of the cells of $F_{\lambda/\mu}$ with integers from the set
$\{1,2,\dots,N\}$,
such that the numbers filled into the cells weakly increase in rows and
strictly increase in columns
(see the left picture of Figure~\ref{fig:skew-Young-tableau} for an illustration).

\begin{figure}
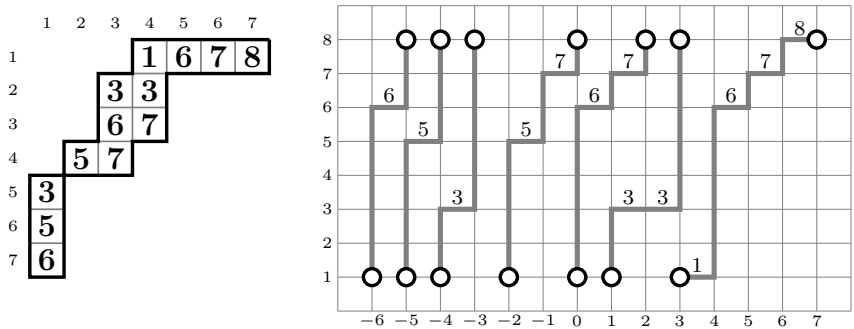

\caption{The left picture presents a semistandard Young tableau $\tableau$ of skew shape $\lambda/\mu$, where
$\lambda=\pas{7, 4, 4, 3, 1, 1, 1}$ and $\mu=\pas{3, 2, 2, 1}$. Assuming that the
entries of $\tableau$ 
are chosen from $\setof{1,2,\dots,8}$ (i.e.: $\tableau$ is an $8$--semistandard Young tableau), the right picture
shows the corresponding family of $7=\length{\lambda}$ nonintersecting lattice paths (with shift $t=0$): Note that the height
of the $j$--th horizontal step in the $i$--th path (the paths are counted from right to left) is equal
to the $j$--th entry in row $i$ of $\tableau$.}
\label{fig:skew-Young-tableau}
\input graphics/ferrers
\end{figure}

Then we can define the \EM{skew Schur function}:

\begin{equation}
\label{eq:skewSchur}
\schurf_{\lambda/\mu}(x_1, x_2, x_3, \dots, x_N) = \sum_{{\tableau}}\weight\of{\tableau},
\end{equation}
where the sum is over all $N$--semistandard skew Young tableaux ${\tableau}$ of
shape $\lambda/\mu$, where the weight $\weight\of{\tableau}$ of ${\tableau}$
is defined as before.

Note that for $\mu=\pas{0,0,\dots}$ the skew Schur function $\schurf_{\lambda/\mu}$ is
identical to the ``ordinary'' Schur function $\schurf_\lambda$.

The Gessel-Viennot interpretation \cite{gessel-viennot:1998} gives an equivalent
description of a semistandard Young tableau $\tableau$ of shape $\lambda/\mu$ as an
$r$--tuple $P=\pas{p_1,\dots,p_{r}}$ of nonintersecting
lattice paths, where $r\defeq\length{\lambda}$: Fix some (arbitrary) integer \EM{shift}
$t$ and consider paths in the lattice $\mathbb Z^2$ (i.e., in the directed
graph with vertices $\mathbb Z\times\mathbb Z$ and arcs from $(j, k)$ to $(j+1, k)$ and
from $(j, k)$ to $(j, k+1)$ for all $j, k$). The $i$--th path $p_i$ starts at $(\mu_i-i+t, 1)$
and ends at $(\lambda_{i}-i+t, N)$, and the $j$--th \EM{horizontal} step in $p_i$ goes from
$\pas{-i+t+j-1,h}$ to $\pas{-i+t+j,h}$, where $h$ is the $j$--th entry in row $i$ of
$\tableau$. Note that the conditions on the entries of $\tableau$ imply that
no two paths $p_i$ and $p_j$ thus defined have a lattice point in common: such an
$r$-tuple of paths is called \EM{nonintersecting}
(see the right picture of Figure~\ref{fig:skew-Young-tableau} for an illustration).

In fact, this translation of tableaux to nonintersecting lattice paths is
a \EM{bijection} between the set of all $N$--semistandard Young tableaux of shape $\lambda/\mu$
and the set of all $r$--tuples of nonintersecting lattice paths with starting and
ending points as defined above. This bijection is \EM{weight preserving} if we define
the weight of an $r$--tuple $P$of nonintersecting lattice paths in the obvious way, i.e., as
\begin{equation*}
\weight\of P \defeq\prod_{k = 1}^{N} x_k^{n(P,k)},
\end{equation*}
where $n(P,k)$ is the number of horizontal steps at height $k$ in $P$. So in the definition
\eqref{eq:skewSchur} we could equivalently replace symbol ``$\tableau$'' by symbol ``$P$'',
and sum over $r$--tuples of lattice
paths with prescribed starting and ending points instead of tableaux with prescribed shape.

Note that the horizontal coordinates of starting and ending points determine uniquely the
\EM{shape} $\lambda/\mu$ of the tableau, and the vertical coordinate (we shall call the 
vertical coordinate of points the \EM{level} in the following) of the ending points determines
uniquely the \EM{set of entries} $\setof{1,2,\dots,N}$ of the tableau. (The choice of the
shift parameter $t$ does influence neither the shape nor the set of entries.) 

\section{Bicoloured paths and products of skew Schur functions}
\label{sec:bicoloured}

In the following, all skew Schur functions are considered as functions of the variables
$\pas{x_1,\dots,x_N}$. (Equivalently, all tableaux have entries from the set
$\setof{1,\dots,N}$, and all families of nonintersecting lattice paths have ending
points on level $N$).

Viewing the product of two skew Schur functions
$$
\schurf_{\lambda/\mu}\cdot\schurf_{\sigma/\tau}
$$
as the generating function of ``overlays of two families of nonintersecting lattice paths''
(according to definition~\eqref{eq:skewSchur}) gives rise to a bijective construction,
which (to the best of our knowledge) was first used by Goulden \cite{goulden:1988}.
This construction was used
in \cite{fulmek:2001} to describe and prove a class of Schur function identities, special
cases of which imply Dodgson's condensation formula and the Pl\"ucker relations.

\begin{figure}
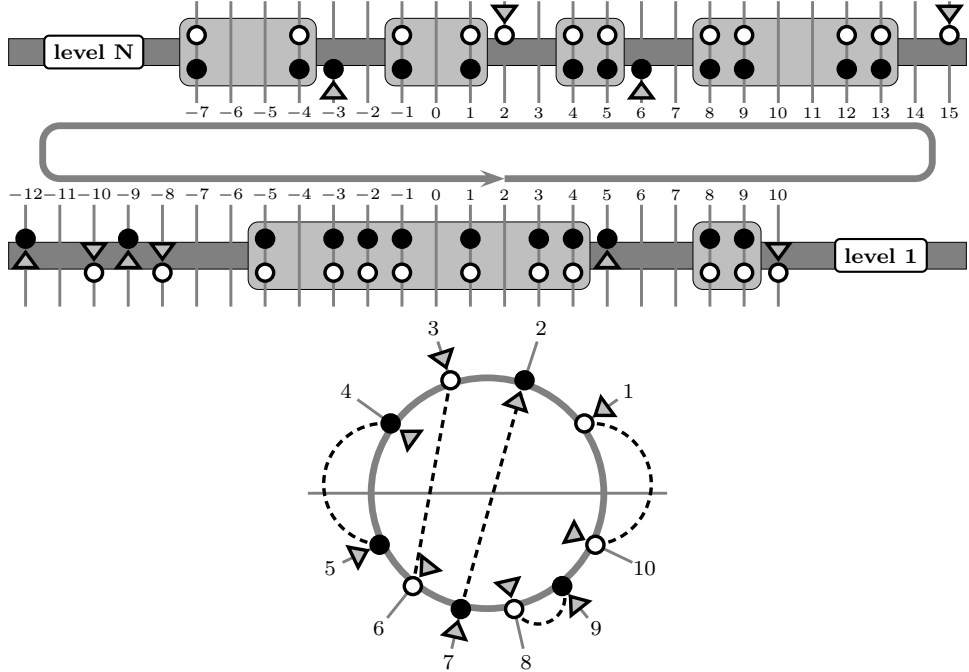

\caption{Illustration for the example given in Section~\ref{sec:bicoloured}.}
\label{fig:twoSkewShapes}
\input graphics/twoSkewShapes
\end{figure}

We shall present this construction by way of an example: Consider skew shapes
$\lambda/\mu$, where
\begin{align}
\lambda &= \pas{14, 13, 13, 11, 11, 9, 9, 8, 8, 7, 5, 3},\notag\\
\mu &= \pas{9, 9, 9, 6, 6, 5, 4, 4, 4, 3, 1},\label{eq:lambdamu}
\end{align}
and $\sigma/\tau$, where
\begin{align}
\sigma &= \pas{14, 14, 12, 12, 11, 11, 11, 9, 8, 7, 7, 5},\notag\\
\tau &= \pas{10, 10, 8, 8, 8, 7, 6, 6, 6, 5, 2}.\label{eq:sigmatau}
\end{align}
For the skew shape $\lambda/\mu$, choose fixed shift $2$, and for the skew shape
$\sigma/\tau$ choose fixed shift $0$, and consider the starting and ending points
of the corresponding families of nonintersecting lattice paths. For instance,
the ending point of the first path corresponding to $\lambda/\mu$ is $\pas{\lambda_1-1+2,N}=\pas{15,N}$, and the starting point of the last (twelfth) path corresponding to $\lambda/\mu$
is $\pas{\mu_{12}-12+2,1}=\pas{-10,1}$, and the ending point of the first path corresponding
to $\sigma/\tau$ is $\pas{\sigma_1-1,N}=\pas{13,N}$.

Now colour the starting/ending points corresponding to $\lambda/\mu$ \EM{white}, and
the starting/ending points corresponding to $\sigma/\tau$ \EM{black}:
See the upper picture of \figref{fig:twoSkewShapes},
where the starting/ending points of $\lambda/\mu$ are drawn as white circles,
and the starting/ending points of $\sigma/\tau$ are drawn as black circles.

All starting/ending points which are coloured both black \EM{and} white are never affected
by the following constructions: In the upper picture of \figref{fig:twoSkewShapes}, these points are enclosed by grey rectangles.

We call the remaining starting/ending points (which are coloured \EM{either} black \EM{or}
white) the \EM{coloured} points. Note that the number of coloured points is necessarily
\EM{even}.

For the coloured points, assume the circular orientation ``from right to left along level $N$,
and then from left to right along level $1$''.
In the upper picture of \figref{fig:twoSkewShapes}, this circular orientation is indicated by a grey circular
arrow.

Furthermore, assign to paths corresponding to
$\lambda/\mu$ the orientation \EM{downwards}, and to paths corresponding to
$\sigma/\tau$ the orientation \EM{upwards}. In the upper picture of \figref{fig:twoSkewShapes}, this orientation of paths is indicated by upwards or downwards pointing triangles.

If we focus on the coloured points, we may encode the situation in a simpler
picture, where the coloured starting/ending points are located on the lower/upper half of a \EM{circle}, and where the orientation of
the respective path is translated to a \EM{radial orientation} (either towards the
center of the circle or away from it). The lower picture of \figref{fig:twoSkewShapes}
illustrates this: A grey horizontal line indicates the separation of the lower and upper half
of the circle, the point labeled $1$ corresponds to the lattice point $\pas{15,N}$,
the point labeled $2$ corresponds to the lattice point $\pas{6,N}$, and so on.

Now consider some pair $\pas{P_1,P_2}$ of families of nonintersecting lattice paths,
where $P_1$ corresponds to some tableau of shape $\lambda/\mu$, and $P_2$ corresponds to
some tableau of shape  $\sigma/\tau$.
We call the paths of $P_1$ the \EM{white} paths and the paths of $P_2$ the \EM{black} paths,
and we colour the arcs of the lattice $\mathbb Z^2$ accordingly (i.e., arcs used by some
white path are coloured white, and arcs used by some black path are coloured black).
As with the starting/ending points, arcs which are coloured black \EM{and} white are
not affected by the following construction, and we call all arcs which are
\EM{either} black \EM{or} white the \EM{coloured} arcs. We 
construct \EM{bicoloured paths}
\bit
\item connecting (only) coloured starting/ending points
\item and using (only) coloured arcs
\eit
by the following algorithm: 
\begin{quote}
We start at some coloured point $q$ and follow
the path determined by the \EM{unique} coloured arc incident with it in the respective orientation (i.e., either up/right or down/left). Whenever we meet \EM{another} path on our way (necessarily, this path is of the other colour),
we ``change colour and orientation'', i.e., we follow this new path \EM{and} change the orientation (i.e., if we were moving up/right along the old path, we move down/left
along the new path, and vice versa). We stop if there is no possibility to go further.
\end{quote}

\begin{figure}
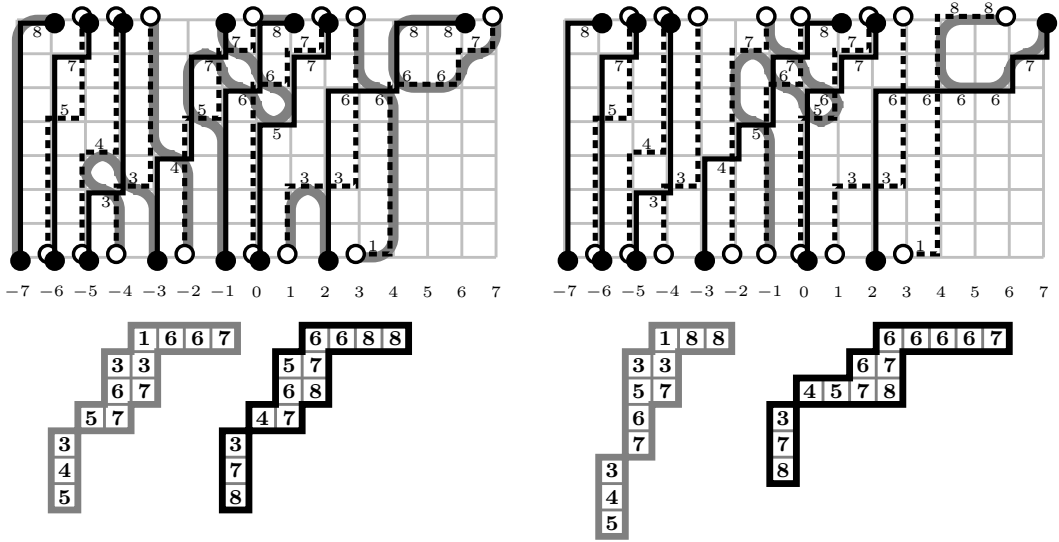

\caption{
The left picture presents two tableaux of the identical shape
$\pas{7, 4, 4, 3, 1, 1, 1}/\pas{3, 2, 2, 1}$ in the lower part, the upper part shows the
corresponding overlay of black and white paths (white paths are indicated by dashed lines)
and \EM{all} bicoloured paths (indicated by thick grey lines). The
right picture presents two tableaux of shapes $\pas{5, 3, 3, 2, 2, 1, 1, 1}/\pas{2, 1, 1, 1, 1}$
and $\pas{9, 5, 5, 1, 1, 1}/\pas{4, 3, 1}$, respectively, in the lower part: These tableaux
correspond to the overlay of lattice paths in the upper part, which is obtained by
\EM{recolouring} the bicoloured paths starting in $\pas{7,8}$ and in $\pas{-1,1}$ (these
bicoloured paths are indicated by thick grey lines, again.}
\label{fig:bicolouredPaths}
\input graphics/bicolouredPaths
\end{figure}

This construction is described in detail in \cite{fulmek:2001}. Here, we simply refer to
the left picture of \figref{fig:bicolouredPaths}, where the white paths are indicated by dashed
lines, and all bicoloured paths are indicated by thick grey lines.

The following observations are immediate:
\begin{obs}[{\bf Bicoloured paths always exist}]
\label{obs:arbitrary-point}
For \EM{every} coloured point $q$, there exists a bicoloured path starting at $q$.
\end{obs}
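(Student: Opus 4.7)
The plan is to justify that the bicoloured--path algorithm, when initiated at any coloured point $q$, takes a well--defined first step, admits a unique continuation at every interior vertex, and terminates at another coloured point; from this the existence of a bicoloured path ``starting at $q$'' follows at once.

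For the initial step, I would argue locally at $q$. Since $q$ is a coloured point, it is a starting or ending point of exactly one colour's family, say white. Hence exactly one white path is incident to $q$, contributing one arc of the lattice; any black path passing through $q$ does so as an \emph{interior} vertex of the black family and therefore uses two distinct arcs at $q$. A brief case analysis, distinguishing whether $q$ lies on level $1$ or level $N$ and whether the incident white and black arcs happen to coincide, shows that the algorithm's prescription of ``the unique coloured arc incident with $q$ in the respective orientation'' is always unambiguous. For the continuation, at each interior vertex $v$ the walk subsequently reaches, either only one of the two families of paths traverses $v$ (and we extend along it), or $v$ is a crossing of two paths of different colours (in which case the ``change colour and orientation'' rule uniquely selects the next arc). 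Finiteness of the walk follows from the boundedness of the underlying lattice paths, so the process must halt; termination can only occur at a coloured point, because at every other vertex the rule just described yields a valid continuation.

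The main obstacle is the local case analysis for the initial step: in particular, when a black path passes through $q$ and happens to share an arc with the white path ending or starting at $q$, that shared arc becomes uncoloured, so the algorithm must pick a different, genuinely coloured arc among those incident with $q$. Verifying that such an arc still exists and is singled out by the orientation convention is where the bookkeeping lies. Once this is dispensed with, the assertion is indeed ``immediate'' as the paper claims, and exactly the same local bookkeeping will reappear when checking the continuation step, so a clean formulation of the vertex--incidence analysis at $q$ should cover both cases uniformly.
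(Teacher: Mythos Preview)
The paper does not prove this observation: it is grouped with Observation~\ref{obs:different-orientation} under the remark ``The following observations are immediate'', with the detailed construction deferred to~\cite{fulmek:2001}. Your proposal therefore supplies an argument where the paper offers none, and the outline you sketch---well-defined first step at $q$, deterministic continuation at interior vertices, termination necessarily at another coloured point---is the natural way to make the claim precise. The subtlety you isolate (a path of the other colour passing through $q$ and sharing the sole arc of $q$'s own colour, so that this arc becomes uncoloured and the algorithm must select a different incident coloured arc) is real and correctly identified.

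One refinement: your termination clause, ``finiteness of the walk follows from the boundedness of the underlying lattice paths'', does not by itself exclude cycling. The clean fix is to note that the transition rule at each interior vertex is reversible---it pairs the incident coloured arcs involutively---so the bicoloured walk traverses each coloured arc at most once; finiteness of the set of coloured arcs then forces termination, and your final remark (that at every non-coloured vertex the rule yields a valid continuation) shows the halt must occur at a coloured point.
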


\begin{obs}[{\bf Bicoloured paths connect points of different radial orientation}]
\label{obs:different-orientation}
The bicoloured paths thus constructed never connect points of the same radial orientation
(i.e., two points oriented both towards or both away from the center).
\end{obs}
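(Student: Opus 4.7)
The plan is to orient each coloured arc according to its colour's convention---white arcs pointing down/left, black arcs pointing up/right---and to show that, with this orientation, every bicoloured path traces a directed walk respecting arc orientations. Once this is in place, the radial orientation at a coloured boundary point $q$ is determined by whether $q$ is a tail or a head of its unique incident coloured arc, and so the two endpoints of a directed walk automatically have opposite radial orientations.

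The first step is to observe that, at every interior lattice vertex carrying coloured arcs, the number of coloured arcs pointing into the vertex equals the number pointing out. If only one coloured path passes through the vertex, it contributes exactly one in-arc and one out-arc, whatever its colour. If a white and a black path cross, each of them contributes one in-arc and one out-arc, yielding two of each.

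The second step is to verify that the rule ``change colour and orientation'' is precisely what makes the bicoloured path enter each interior vertex through an in-arc and leave through an out-arc. At a non-crossing vertex this is immediate, since the bicoloured path simply continues along the unique coloured path through the vertex in its natural direction. At a crossing the path arrives, say, along a white in-arc (moving down/left); the algorithm then switches to black and reverses the direction to up/right, which is exactly traversing a black arc in its assigned up/right orientation, that is, leaving via a black out-arc. The symmetric case of arrival along a black in-arc is identical.

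For the third step one checks by inspection that, at each coloured boundary point $q$, the point $q$ is the tail of its unique incident coloured arc precisely when that arc enters the lattice at $q$, which happens exactly when $q$ is an upper endpoint of a white path or a lower endpoint of a black path; this matches the list of points oriented towards the centre in the lower picture of Figure~\ref{fig:twoSkewShapes}. Symmetrically, $q$ is the head of its incident arc if and only if $q$ is oriented away from the centre. Combining the three steps, a bicoloured path, traversed so that each coloured arc is used in its assigned direction, runs from the tail of its first arc (an inward point) to the head of its last arc (an outward point), and hence its two endpoints have opposite radial orientations.

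I do not expect any deep difficulty here; the main challenge is purely notational. One must keep the three incompatible uses of ``orientation'' clearly separated---the down/left or up/right convention of each colour, the direction of traversal of the bicoloured path, and the radial orientation of a boundary point---and translate the somewhat cumbersome rule ``change colour and orientation'' into the cleaner assertion that every coloured arc of a bicoloured path is used exactly in its own orientation.
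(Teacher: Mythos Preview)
Your argument is correct. The paper itself offers no proof for this observation, stating only that it is ``immediate''; your proposal spells out precisely the parity-of-direction reason that makes it so. The key point you identify --- that orienting white arcs down/left and black arcs up/right turns every bicoloured path into a consistently directed walk, because the rule ``change colour and orientation'' exactly compensates for the opposite conventions of the two colours --- is the natural way to see why the two endpoints must be of opposite radial type. Your Step~1 (in-degree equals out-degree at interior vertices) is correct and confirms that such a directed walk cannot terminate except at a coloured boundary point, though strictly speaking Observation~\ref{obs:different-orientation} only concerns the two endpoints and Step~2 together with Step~3 already suffice. Your closing remark about the notational hazard is apt: the word ``orientation'' is overloaded three ways in this passage, and keeping them separate is indeed the only real work.
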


In the lower picture of \figref{fig:twoSkewShapes}, a possible pattern of ``connections
by bicoloured paths'' is indicated by dashed lines.

The following observation is easy to see:
\begin{obs}[{\bf Bicoloured path connect points of different parity}]
\label{obs:different-parity}
Two different bicoloured paths may have lattice points in common (they may intersect),
but they can never \EM{cross}.
If we assume some consecutive numbering of the coloured points in their circular
orientation (see the lower picture of \figref{fig:twoSkewShapes}), then this non--crossing condition implies that there can never be a bicoloured path connecting two points with numbers of the same parity.
\end{obs}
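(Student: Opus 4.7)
My plan is to prove the two parts of the observation in order: first, that two distinct bicoloured paths never cross in the plane, and second, that this non-crossing condition forces paired endpoints to differ in parity under the circular numbering.

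For the non-crossing statement, I proceed by a local analysis at an arbitrary lattice vertex $v$. Since $P_1$ and $P_2$ each consist of \EM{nonintersecting} lattice paths, at most one white path and at most one black path passes through $v$, so $v$ is incident to at most four coloured arcs. The only nontrivial case is when a white and a black path both pass through $v$ \EM{without} sharing any arc, so that exactly four coloured arcs remain at $v$ (two incoming from south/west, two outgoing to north/east, one arc of each colour on each side). A short case analysis on the four possible shapes of each monochromatic path at $v$ (straight horizontal, straight vertical, or one of the two $L$-shaped corners) shows that the rule \EM{change colour and orientation} always pairs each incoming coloured arc with the unique outgoing coloured arc of the opposite colour lying on the \EM{same} half of the up/right versus down/left dichotomy. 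The two bicoloured segments through $v$ thereby always occupy two opposite quadrants of $v$ (either SW together with NE, or NW together with SE) and therefore touch at $v$ without crossing. The degenerate subcases (paths sharing one arc at $v$, or only a single monochromatic path through $v$) contribute at most one bicoloured segment at $v$ and pose no crossing issue. Since $v$ was arbitrary, no two distinct bicoloured paths ever cross.

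For the parity statement, I use the standard reasoning for non-crossing chord diagrams. By Observation~\ref{obs:arbitrary-point}, every coloured point is the endpoint of exactly one bicoloured path, so the bicoloured paths induce a perfect matching of the $2m$ coloured points arranged on the circle of the lower picture of \figref{fig:twoSkewShapes}. Drawing each bicoloured path as a chord of this circle yields, by part one, a non-crossing perfect matching on these $2m$ points. A routine induction on $m$ then shows that whenever $\{i,j\}$ with $i<j$ is a matched pair, the $j-i-1$ points strictly between $i$ and $j$ along the circle must be matched among themselves (any chord from such an interior point to an exterior point would cross the chord $\{i,j\}$), so $j-i-1$ is even and $i$ and $j$ have opposite parities.

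The main obstacle is the local analysis in part one: although finite and elementary, it requires the simultaneous bookkeeping of colour swaps and orientation reversals, especially in the subcases where the two monochromatic paths share exactly one arc at $v$ (so only two coloured arcs remain, possibly both "outgoing" or both "incoming" at $v$). Once the non-crossing property is secured, the parity statement follows as a clean combinatorial consequence.
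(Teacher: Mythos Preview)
The paper itself offers no proof of this observation; it is simply declared ``easy to see'' and followed by a restatement of what non-crossing means for chords on the circle. Your proposal therefore supplies an argument where the paper gives none, and both parts are essentially correct.

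One small correction to your local analysis: at a vertex $v$ carrying four distinct coloured arcs, the pairing rule is not ``incoming with outgoing'' as you write, but incoming with incoming and outgoing with outgoing. If the white path has arcs $w_{\text{in}}\in\{S,W\}$ and $w_{\text{out}}\in\{N,E\}$, and the black path has $b_{\text{in}}\in\{S,W\}\setminus\{w_{\text{in}}\}$ and $b_{\text{out}}\in\{N,E\}\setminus\{w_{\text{out}}\}$, then a bicoloured path arriving via $w_{\text{in}}$ (moving up/right on white) switches at $v$ to black moving down/left and hence leaves via $b_{\text{in}}$; symmetrically $w_{\text{out}}$ is paired with $b_{\text{out}}$. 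Thus the two bicoloured segments at $v$ \emph{always} occupy the SW and NE quadrants, and your alternative ``NW together with SE'' in fact never occurs. This only strengthens your conclusion, since the configuration that actually arises is non-crossing. Your parity deduction in part two is the standard argument for non-crossing perfect matchings on a circle and is correct as stated.
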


The non--crossing condition means that
if all such connections were drawn as straight lines connecting points on the circle, then
no two such lines can intersect. (In \figref{fig:twoSkewShapes}, not all connections are
drawn as straight lines for graphical reasons.) 


Consider some bicoloured path $b$ in the overlay of nonintersecting lattice paths $\pas{P_1,P_2}$: 
Changing colours (black to white and vice versa)
\bit
\item of both ending points of $b$
\item and of all arcs of $b$
\eit
gives a new overlay of nonintersecting lattice paths  $\pas{P_1^\prime,P_2^\prime}$
(with different starting/ending points). It is easy to see that we have for this
\EM{recolouring} of a bicoloured path:
\begin{obs}[{\bf Recolouring bicoloured paths is a weight preserving involution}]
\label{obs:weight-preserving-involution}
The recolouring of a bicoloured path $b$ in an overlay  of nonintersecting lattice
paths $\pas{P_1,P_2}$ is an \EM{involutive operation} (i.e., if we obtain
the overlay $\pas{P_1^\prime,P_2^\prime}$ by recolouring $b$ in $\pas{P_1,P_2}$, then
recolouring $b$ \EM{again} in $\pas{P_1^\prime,P_2^\prime}$ yields the original
$\pas{P_1,P_2}$), which \EM{preserves 
the respective weights}, i.e.,
$$
\weight\of{P_1}\cdot\weight\of{P_2}=\weight\of{P_1^\prime}\cdot\weight\of{P_2^\prime}.
$$
\end{obs}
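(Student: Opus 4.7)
The plan has three components: establish (a) the weight identity, (b) that $(P_1',P_2')$ is indeed again an overlay of two nonintersecting families of lattice paths with valid starting and ending points, and (c) involutivity. Part (a) follows at once from the factorisation
$$
\omega(P_1)\cdot\omega(P_2) \;=\; \prod_{k=1}^{N} x_k^{n(P_1,k)+n(P_2,k)},
$$
whose exponent at each height $k$ counts the total number of coloured horizontal arcs at that height in the overlay; a colour swap affects which colour an arc carries but not whether it exists, so each $n(P_1,k)+n(P_2,k)$ is invariant.

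Part (c) is almost as direct. The crucial point is that the very same ordered sequence of arcs that defines $b$ in $(P_1,P_2)$ still defines a bicoloured path in $(P_1',P_2')$: the defining algorithm --- start at a coloured endpoint, proceed along the unique coloured arc in the prescribed orientation, switch colour and orientation whenever another path is encountered --- is symmetric in the two colours, and at every vertex along $b$ the local arc configuration has merely been reflected in the global swap. Hence $b$ is retraced step by step, and a second application of the recolouring restores every arc and endpoint to its original colour.

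The real work lies in (b). For the endpoints, each of the two ends of $b$ simply flips its colour while remaining at its lattice position; since every coloured point on level $N$ is an ending point of whichever family it belongs to and every coloured point on level $1$ is a starting point of whichever family it belongs to, the multisets of white and black starting/ending points after the swap remain balanced and determine new skew shapes $\lambda'/\mu'$, $\sigma'/\tau'$. For the nonintersection property, I would argue locally at each lattice vertex $v$ lying on $b$. If $b$ passes through $v$ without a colour switch, then in the original overlay only a single path visits $v$ --- for a second path of the other colour would have forced $b$ to switch there --- and recolouring simply changes that one path's colour, preserving single occupancy in each colour. If $b$ does switch colour at $v$, then exactly one white and one black path originally pass through $v$ on four distinct arcs, of which $b$ uses precisely two (one of each colour); recolouring these two arcs splices the white and black paths at $v$, producing one new white path and one new black path through $v$. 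In both cases no collision is created inside either colour, and because arcs and vertices off $b$ are untouched, the white arcs of $(P_1',P_2')$ globally form a nonintersecting family, and likewise for the black arcs.
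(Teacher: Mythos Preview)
Your proof is correct and considerably more detailed than anything in the paper: the paper states this as an Observation preceded only by the phrase ``It is easy to see,'' with no argument whatsoever. Your decomposition into (a) weight preservation, (b) well-definedness of the resulting overlay, and (c) involutivity is the natural one, and each part is handled soundly; in particular, the observation that the bicoloured-path algorithm is symmetric in the two colours, so that the same sequence of arcs is retraced in $(P_1',P_2')$, is exactly the right reason for involutivity.

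One small imprecision in part (b): at a switch vertex $v$ it need not be true that the white and black paths occupy four \emph{distinct} arcs. The two arcs of $b$ at $v$ are indeed distinct (one white-only, one black-only) and are necessarily both in-arcs or both out-arcs of $v$; but the remaining white arc and black arc at $v$ may coincide, forming a single uncoloured arc. This does not damage your conclusion: that shared arc is not recoloured and simply continues to serve both new paths through $v$, so the splicing still produces exactly one new white path and one new black path at $v$.
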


\begin{figure}
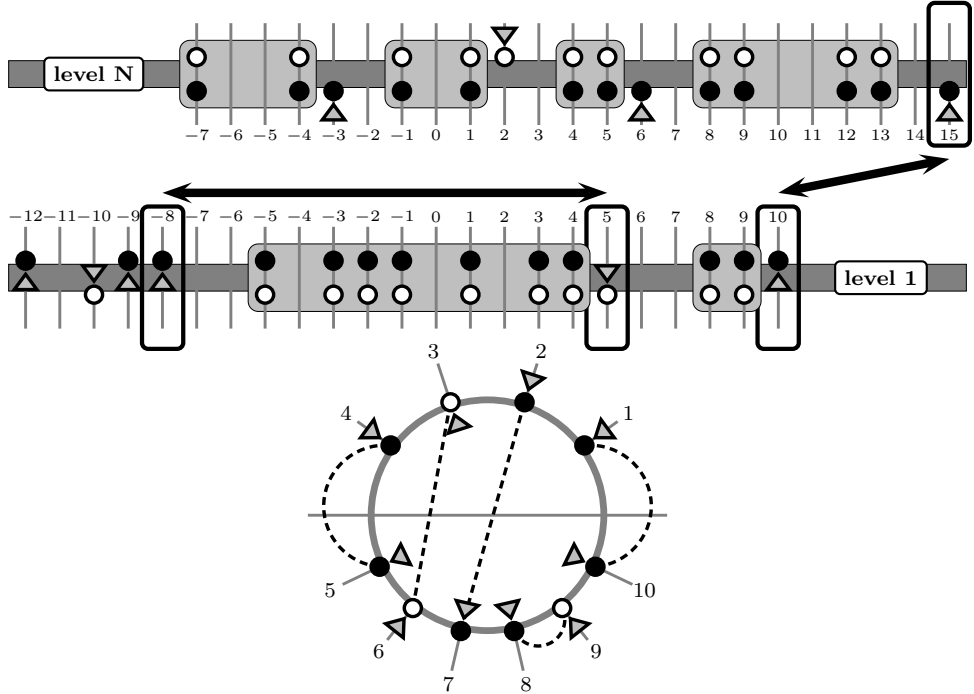

\caption{The two (necessarily different, by
Observation~\ref{obs:different-orientation}!)
bicoloured paths in \figref{fig:twoSkewShapes} which start at the white ending point
$\pas{15,N}$ and at the the black starting point $\pas{5,1}$ may have their respective other
ending points in $\pas{10, 1}$ and $\pas{-8,1}$. In the picture below, the corresponding points are marked by
white rectangles, the bicoloured paths are indicated by arrows. The picture shows the situation \EM{after} recolouring these paths.
}
\label{fig:twoSkewShapesRecoloured}
\input graphics/twoSkewShapesRecoloured
\end{figure}

Return to the example illustrated in \figref{fig:twoSkewShapes} and
consider the white ending point $\pas{15,N}$
and the black starting point $\pas{5,1}$ there. Note that both of these points are marked
with a triangle pointing \EM{downward}. The bicoloured paths ending at these points must have
their other ending points marked with a triangle pointing \EM{upward}. One possible choice of these other
ending points is depicted in \figref{fig:twoSkewShapesRecoloured}: The corresponding ending
points are marked by
white rectangles, the bicoloured paths are indicated by arrows. The picture shows the situation \EM{after} recolouring these paths.

The skew shape corresponding to the white points in \figref{fig:twoSkewShapesRecoloured} is
$\lambda^\prime/\mu^\prime$, where
\begin{align}
\lambda^\prime &=\pas{13, 13, 11, 11, 9, 9, 8, 8, 7, 5, 3},\notag\\
\mu^\prime&= \pas{9, 9, 7, 7, 7, 6, 5, 5, 5, 4, 0}.\label{eq:lambdaprime-muprime}
\end{align}
The skew shape corresponding to the black points in \figref{fig:twoSkewShapesRecoloured}
is  $\sigma^\prime/\tau^\prime$, where
\begin{align}
\sigma^\prime &= \pas{15, 14, 14, 12, 12, 11, 11, 11, 9, 8, 7, 7, 5},\notag\\
\tau^\prime &= \pas{10, 10, 10, 7, 7, 6, 5, 5, 5, 4, 2, 2, 0}.\label{eq:sigmaprime-tauprime}
\end{align}
For both skew shapes, the starting and ending points are shifted by $1$,
so, for instance, the starting point of the first path corresponding to
$\sigma^\prime/\tau^\prime$ is $\pas{\tau^\prime_1-1+1,1}=\pas{10,1}$ and the
ending point of the last (eleventh) path corresponding to $\lambda^\prime/\mu^\prime$
is $\pas{\lambda_{11}-11+1,N}=\pas{-7,N}$.

The pictures in \figref{fig:twoSkewShapesRecoloured} contain 
redundant information: All uncoloured points are ``doubled'', and the colour (black or white)
of the coloured points is determined uniquely
by their circular orientation. So we may encode the information in a more terse
way, namely as
\bit
\item the \EM{\pointconf} (see the upper picture in \figref{fig:GenericConfiguration}),
\item and the \EM{\pointarr} (see the lower picture in \figref{fig:GenericConfiguration}).
\eit
\begin{figure}
\caption{The information of \figref{fig:twoSkewShapesRecoloured} can be encoded in a more
terse way, namely as the \pointconf\ (shown in the upper picture) and the \pointarr\ 
(shown in the lower picture): The grey points in the upper picture are ``doubled''
points (coloured black \EM{and} white), and the colour of the white points in the upper
picture is determined by the \EM{orientation} of the corresponding points in the lower
picture (points in the upper half which are inwardly orientated and points in the lower
half which are outwardly oriented should be coloured black; all other points
should be coloured white).
}
\label{fig:GenericConfiguration}
\input graphics/GenericConfiguration
\end{figure}

We call a \pointarr\ \EM{admissible} if it has the same number of inwardly/outwardly
oriented points. \EM{Every} admissible \pointarr\ determines (together with the
corresponding \pointconf) a certain configuration of starting/ending points.
However, there might be no overlay of families of nonintersecting lattice paths that connect
these points (if, for instance, the $i$--th white ending point lies to the left of the
$i$--th white starting point; this would correspond to an $i$--th row of length $<0$
in the corresponding shape): In this case, the corresponding skew Schur function is
zero. But if there \EM{is} an overlay of families of nonintersecting lattice paths that connect
these points, then the family of all bicoloured paths determines a \EM{perfect matching} $M$
in the \pointarr\ (according to Observation~\ref{obs:arbitrary-point}), which is \EM{non--crossing}
(according to Observation~\ref{obs:different-parity}), and where all edges of $M$
connect points of different radial orientation (according to
Observation~\ref{obs:different-orientation}): We call such matchings \EM{admissible}. Note that recolouring some bicoloured
path amounts to reversing the orientation of the corresponding edge in the matching $M$.

We may summmarize all these considerations as follows (this is a reformulation of
\cite[Lemma~15]{fulmek:2001}):

\begin{lem}
\label{lem:fulmek1}
Let $\lambda/\mu$ and $\sigma/\tau$ be two skew shapes, and let $t$ be an arbitrary
integer. Consider the \pointconf\ corresponding to the starting/ending points with shift
$0$ for $\lambda/\mu$ and shift $t$ for $\sigma/\tau$. In the corresponding \pointarr,
choose a nonempty subset $S$ (arbitrary, but fixed) of the points oriented towards
the center.

Consider the set $V$ of \EM{all} admissible \pointarr s,
and consider the graph $G$ with vertex set $V$, where two vertices
$v_1$, $v_2$ are connected by an edge if and only if there are overlays of lattice
paths $\pas{P_1,P_2}$ and $\pas{P^\prime_1,P^\prime_2}$ for the starting/ending points corresponding to $\pas{\text{\pointconf},v_1}$
and $\pas{\text{\pointconf},v_2}$, respectively, such that $\pas{P^\prime_1,P^\prime_2}$
is obtained from $\pas{P_1,P_2}$ by recolouring \EM{all} bicoloured paths that are incident with
some point of $S$.

Obviously, this graph $G$ is \EM{bipartite} (i.e., $V=E\cup O$ with $E\cap O=\emptyset$,
such that there is no edge connecting two vertices of $E$ or two vertices of $O$).

Let $C$ be an arbitrary
connected component of $G$ with at least $2$ vertices, and denote by $C_O$
the set of pairs of skew shapes corresponding to $\pas{\text{\pointconf},x}$ for
$x\in O$, and by $C_E$
the set of pairs of skew shapes corresponding to $\pas{\text{\pointconf},x}$ for
$x\in E$.
Then we have the following identity for skew Schur functions:
\begin{equation}
\label{eq:lemma-general}
\sum_{\pas{\lambda/\mu,\;\sigma/\tau}\in C_E}
	\schurf_{\lambda/\mu}\cdot\schurf_{\sigma/\tau}
=
\sum_{\pas{\lambda^\prime/\mu^\prime,\;\sigma^\prime/\tau^\prime}\in C_O}
	\schurf_{\lambda^\prime/\mu^\prime}\cdot\schurf_{\sigma^\prime/\tau^\prime}.
\end{equation}
\end{lem}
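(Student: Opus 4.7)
My plan is to prove the identity by exhibiting a weight--preserving, involutive bijection on the set of overlays of nonintersecting lattice path families associated with the connected component $C$.

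First, I would expand both sides of~\eqref{eq:lemma-general} via the Gessel--Viennot interpretation of skew Schur functions: for every $v \in C_E$ (respectively $v \in C_O$), the corresponding product $\schurf_{\lambda/\mu}\cdot\schurf_{\sigma/\tau}$ equals $\sum_{\pas{P_1,P_2}}\weight\of{P_1}\weight\of{P_2}$, where the sum ranges over all overlays of two families of nonintersecting lattice paths with starting and ending points prescribed by $\pas{\text{\cop},v}$. Summing over $v \in C_E$ (respectively $v \in C_O$) then reproduces the \LHS\ (respectively \RHS) of~\eqref{eq:lemma-general}.

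Next, I would introduce a map $\Phi$ on the set of all overlays whose shapes correspond to vertices of $C$: given $\pas{P_1,P_2}$, identify the family of bicoloured paths via the construction of Section~\ref{sec:bicoloured}, select those incident with at least one point of $S$, and recolour every one of them. Since distinct bicoloured paths share no coloured arcs (by Observation~\ref{obs:different-parity} they can meet only at lattice points), the individual recolourings act on disjoint arc sets and may be performed simultaneously. Observation~\ref{obs:weight-preserving-involution}, applied path by path, then shows that $\Phi$ is weight--preserving. Moreover, since each recolouring turns a singly--coloured arc into another singly--coloured arc (of the swapped colour), the decomposition of the coloured arcs into bicoloured paths---and in particular the subfamily incident with $S$---is preserved by $\Phi$; hence applying $\Phi$ a second time recovers $\pas{P_1,P_2}$, so $\Phi$ is an involution.

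Finally, I would match $\Phi$ against the bipartite structure of $G$: if $\pas{P_1,P_2}$ is an overlay for some $v \in C_E$ and $\Phi\pas{P_1,P_2}$ is an overlay for some $v'$, then by the very definition of the edges of $G$, $\setof{v,v'}$ is an edge of $G$; hence $v'$ lies in the same connected component $C$, and bipartiteness forces $v' \in C_O$. Involutivity of $\Phi$ then turns it into a weight--preserving bijection between the overlays of shapes in $C_E$ and the overlays of shapes in $C_O$, and summation of $\weight\of{P_1}\weight\of{P_2}$ on both sides of this bijection immediately yields~\eqref{eq:lemma-general}.

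The main obstacle I anticipate is justifying the stability of the bicoloured path decomposition under $\Phi$---namely, that the family of bicoloured paths, regarded purely as an arc--set decomposition of the singly--coloured arcs, is unchanged by the simultaneous recolouring, so that the paths ``incident with $S$'' in $\Phi\pas{P_1,P_2}$ are the same arc sets as in $\pas{P_1,P_2}$. Once this invariance is cleanly in place, the remainder of the proof is a routine application of the involution principle combined with the graph--theoretic bookkeeping supplied by the bipartiteness hypothesis on $G$.
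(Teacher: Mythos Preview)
Your proposal is correct and follows essentially the same approach as the paper: the paper does not give a separate formal proof of this lemma but presents it as a direct summary of Observations~\ref{obs:arbitrary-point}--\ref{obs:weight-preserving-involution}, and your argument is precisely the explicit unpacking of that summary into a weight--preserving involution on overlays. The ``main obstacle'' you flag---invariance of the bicoloured--path decomposition under simultaneous recolouring---is exactly the content needed to lift Observation~\ref{obs:weight-preserving-involution} from a single path to all paths through $S$, and the paper likewise leaves this point implicit.
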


This Lemma is rather unwieldy. But there is a particularly simple situation
which appears to be useful, so we state it as a Theorem (this is a reformulation of
\cite[Lemma~16]{fulmek:2001}):
\begin{thm}
\label{lem:fulmek2}
Under the assumptions of Lemma~\ref{lem:fulmek1}, let $c$ be the \pointarr\ for the
pair of shapes $\pas{\lambda/\mu,\;\sigma/\tau}$, and assume that the orientation of
the points in $c$  is \EM{alternating}. As in Lemma~\ref{lem:fulmek1}, let $S$
be some fixed subset of the points oriented towards the center in $c$.

Consider the set of all \pointarr s which can be obtained by reorienting all edges incident
with points in $S$ in some admissible matching of $c$, and denote the set of pairs of
skew shapes corresponding to such \pointarr s by $Q$.

Then we have:
\begin{equation}
\label{eq:lemma-special}
\schurf_{\lambda/\mu}\cdot\schurf_{\sigma/\tau}
=
\sum_{\pas{\lambda^\prime/\mu^\prime,\;\sigma^\prime/\tau^\prime}\in Q}
	\schurf_{\lambda^\prime/\mu^\prime}\cdot\schurf_{\sigma^\prime/\tau^\prime}.
\end{equation}
\end{thm}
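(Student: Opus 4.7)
The plan is to derive Theorem~\ref{lem:fulmek2} as the special case of Lemma~\ref{lem:fulmek1} in which, under the alternating hypothesis, the connected component of $c$ in the graph $G$ is a ``star'' with $c$ alone on one side of the bipartition and precisely the elements of $Q$ on the other. Pinning down the bipartition is the easy part: in any admissible matching each point of $S$ is incident with a unique edge, and the $S$-recolouring reverses the radial orientation of both endpoints of those edges; in particular the radial orientations of \emph{all} points of $S$ get flipped by a single step. Hence $G$ is bipartite with $E$ the set of configurations whose $S$-points carry the same orientation as in $c$ and $O$ those in which these orientations are flipped, so that $c\in E$, $Q\subseteq O$, and every $q\in Q$ is already adjacent to $c$.

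The main obstacle is to show that no vertex of $E$ besides $c$ belongs to the component of $c$, or equivalently that applying the $S$-recolouring to an arbitrary admissible matching of an arbitrary $q\in Q$ always returns to $c$. This is where the alternating hypothesis is genuinely used, through a parity argument. Label the coloured points $1,2,\dots,2k$ consecutively in their circular orientation; by alternation we may take odd-labelled points as inward in $c$ and even-labelled as outward, so $S\subseteq\{1,3,\dots,2k-1\}$. The key combinatorial observation is then that any non-crossing perfect matching on $2k$ points arranged on a circle pairs odd labels with even labels: an edge $\{a,b\}$ with $a<b$ leaves an arc of $b-a-1$ points which must itself be perfectly matched, whence $b-a-1$ is even and $a,b$ have opposite parity.

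Given this, fix $q\in Q$ and let $T\subseteq\{1,\dots,2k\}$ be the set of labels at which $q$ and $c$ disagree in orientation; by construction $T=S\disjuni(T\without S)$, with $T\without S$ the set of $M$-partners of $S$-points in some admissible matching $M$ of $c$ producing $q$. The parity observation forces $T\without S$ to consist of even-labelled points with $|T\without S|=|S|$. In $q$ the points of $S$ have become outward and in any admissible matching $M'$ of $q$ must each be matched to an inward partner, which by parity is even-labelled; but the even-labelled inward points of $q$ are exactly those of $T\without S$. A cardinality count identifies the set of $M'$-partners of $S$ with $T\without S$ (though the specific bijection between $S$ and $T\without S$ may depend on $M'$), and hence the $S$-recolouring applied to $M'$ flips exactly $S\cup(T\without S)=T$, reversing precisely those positions at which $q$ differs from $c$ and so returning to $c$. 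Substituting this into Lemma~\ref{lem:fulmek1} applied to the component $\{c\}\disjuni Q$, with $C_E=\{c\}$ and $C_O=Q$, immediately yields~\eqref{eq:lemma-special}.
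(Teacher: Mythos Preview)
Your proof is correct and follows essentially the same route as the paper: the core step---that Observations~\ref{obs:different-orientation} and~\ref{obs:different-parity} together with the alternating hypothesis force the $S$-recolouring of any overlay for a configuration in $Q$ to land back at $c$---is precisely the paper's argument, which you have made fully explicit via the parity/cardinality count on the even-labelled inward points. The only cosmetic difference is that you package the conclusion by invoking Lemma~\ref{lem:fulmek1} on the star component $\{c\}\disjuni Q$, whereas the paper argues the bijection directly without naming Lemma~\ref{lem:fulmek1}.
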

\begin{proof}
Observe that in the right hand side of \eqref{eq:lemma-special} the Schur function product
$\schurf_{\lambda^\prime/\mu^\prime}\cdot\schurf_{\sigma^\prime/\tau^\prime}$
is either zero (if there is, in fact, no overlay of families of nonintersecting
lattice paths corresponding to the respective \pointarr), or there is some
corresponding overlay of families of nonintersecting
lattice paths $\pas{P_1^\prime,P_2^\prime}$. In the latter case, there \EM{are} bicoloured
paths starting in the points of $S$ (by Observation~\ref{obs:arbitrary-point}),
and by the combination of Observations \ref{obs:different-orientation} and \ref{obs:different-parity}, recolouring all such paths necessarily yields an overlay $\pas{P_1,P_2}$ of nonintersecting
lattice paths which corresponds to the pair $\pas{\lambda/\mu,\;\sigma/\tau}$.
\end{proof}
\section{Applications}
\label{sec:applications}
Clearly, the interpretation of Schur functions as generating functions of
$r$--tuples of nonintersecting lattice paths is best suited for the bijective
construction of recolouring bicoloured paths. But of course, the recolouring
operation can be translated into operations for the shapes of the corresponding tableau
(i.e., for the corresponding partitions, or equivalently, Ferrers diagrams).

We shall show how this translation gives the identity \cite[(3.3)]{gurevichPyatovSaponov:2009}
of Gurevich, Pyatov and Saponov, but before doing this we consider a simple special case,
in order to illustrate the meaning of Theorem~\ref{lem:fulmek2}:
\begin{ex}
\label{ex:fulmek}
Assume that for the shapes $\lambda/\mu$ and $\sigma/\tau$  we have
\bit
\item $\mu=\tau$,
\item $\lambda_1>\sigma_1$,
\item $\length{\lambda}=\length{\sigma}$.
\eit
Choose shift $0$ for the families of starting/ending points corresponding to these
shapes, then there is no coloured starting point (since $\mu=\tau$): accordingly, only the
ending points are shown in \figref{fig:twoSimpleShapes}. Furthermore,
assume that the corresponding black and white ending points \EM{alternate} along level $N$: Then
the preconditions of Theorem~\ref{lem:fulmek2} are fulfilled.
Since $\lambda_1>\mu_1$, the point $q=\pas{\lambda_1-1,N}$ is white. Consider
the set $S=\setof{q}$: The
bicoloured path $b$ starting in $q$ necessarily must end in a black point
(by Observation~\ref{obs:different-orientation}). Assume that there are $k$ such
black points $q_1,\dots, q_n$, and let $\indexthis{\lambda}{i}$ and $\indexthis{\sigma}{i}$ be the
partitions corresponding to the configuration of white and black points
obtained by changing colours of $q$ and $q_i$, $i=1,\dots,k$ (i.e., colour $q$ black
and $q_i$ white, and leave all other colours unchanged). Then by Theorem~\ref{lem:fulmek2}
we have:
$$
\schurf_{\lambda/\mu}\cdot\schurf_{\sigma/\mu} =
\sum_{i=1}^k \schurf_{\indexthis{\lambda}{i}/\mu}\cdot\schurf_{\indexthis{\sigma}{i}/\mu}
$$
\end{ex}
\begin{figure}
\caption{Application of Lemma~\ref{lem:fulmek2} to the special case $\mu=\tau$, $\lambda_1>\sigma_1$,
$\length{\lambda}=\length{\sigma}$, and shift $0$ for \EM{all} starting/ending points:
Since there is no coloured starting point in this case, only the
ending points (on level $N$) are shown.
The point $\pas{15,N}$ (ending point of bicoloured path $b$) is marked with a white
rectangle in the upper row. The middle row and the lower row show the configurations
which can arise by recolouring $b$; the respective other ending points $\pas{6,N}$
and $\pas{-3,N}$ are again marked with a white
rectangle.
}
\label{fig:twoSimpleShapes}
\input graphics/twoSimpleShapes
\end{figure}

\figref{fig:twoSimpleShapes} illustrates this example for
\begin{align*}
\lambda&=\pas{16, 15, 15, 13, 13, 11, 11, 10, 10, 9, 7, 5},\\
\sigma&= \pas{14, 14, 12, 12, 11, 11, 11, 9, 8, 7, 7, 5}.
\end{align*}
From the pictures in \figref{fig:twoSimpleShapes} we see that $k=2$ in this case, with
\begin{align*}
\indexthis{\lambda}{1}&=\pas{14, 14, 12, 12, 11, 11, 11, 10, 10, 9, 7, 5},\\
\indexthis{\sigma}{1}&= \pas{16, 15, 15, 13, 13, 11, 11, 9, 8, 7, 7, 5},
\end{align*}
(shown in the middle row of \figref{fig:twoSimpleShapes}) and 
\begin{align*}
\indexthis{\lambda}{2}&=\pas{14, 14, 12, 12, 10, 10, 9, 9, 8, 7, 7, 5},\\
\indexthis{\sigma}{2}&= \pas{16, 15, 15, 13, 13, 12, 12, 12, 10, 9, 7, 5}
\end{align*}
(shown in the lower row of \figref{fig:twoSimpleShapes}).
\figref{fig:FerrersDiagrams} presents the Ferrers diagrams for this example,
where we chose $\mu=\tau=\pas{5,4,2,2,2,1,1,1}$.

\begin{figure}
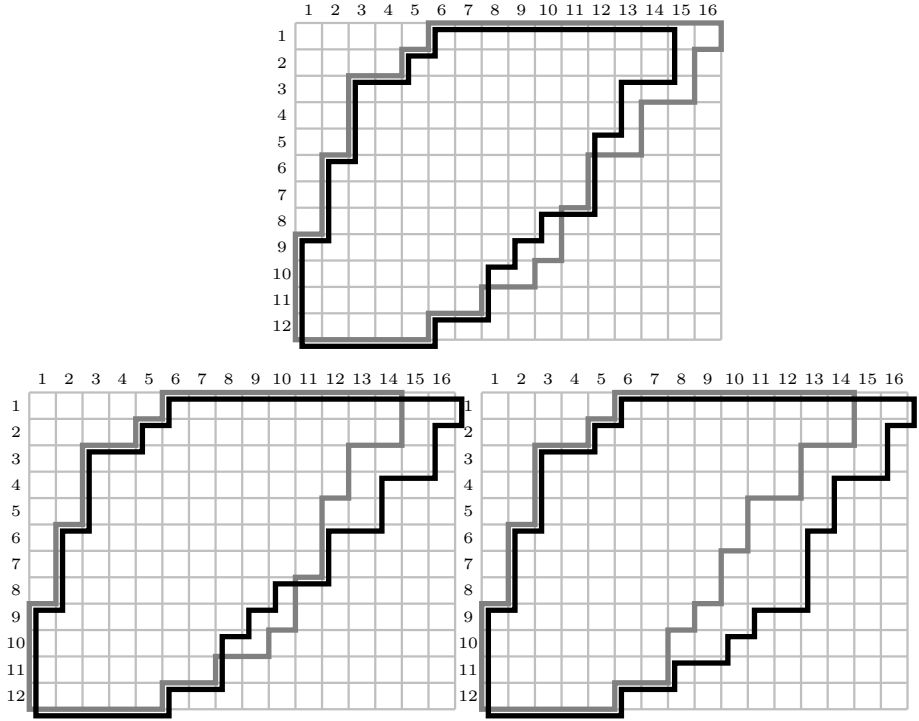

\caption{The skew Ferrers boards corresponding to the example in \figref{fig:twoSimpleShapes}
for $\mu=\tau=\pas{5,4,2,2,2,1,1,1}$: The upper picture shows the Ferrers board of
$\lambda/\mu$ (drawn with grey lines) and $\sigma/\tau$ (drawn with black lines). The lower
pictures show the Ferrers boards of $\indexthis{\lambda}{i}/\mu$ and $\indexthis{\sigma}{i}/\tau$, $i=1,2$.
}
\label{fig:FerrersDiagrams}
\input graphics/FerrersDiagrams
\end{figure}

\subsection{The identity of Gurevich, Pyatov and Saponov}
\label{sec:saponov}
Now consider the special case $\mu=\tau$, $\lambda_1>\sigma_1$,
$\length{\lambda}=\length{\sigma}+1$, with shift $0$ for all starting and ending points,
and where black and white points alternate in their circular orientation.
As in Example~\ref{ex:fulmek}, let $q=\pas{\lambda_1-1,N}$ and choose $S=\setof{q}$.
The possible ending points of the bicoloured path $b$ ending in $q$
are
\bit
\item the black points at level
$N$
\item and the leftmost white starting point $q_0$.
\eit
As our running example we choose the skew shapes $\lambda/\mu$ and $\sigma/\tau$ with
\begin{align*}
\lambda &= \pas{10,7,7,6,6,4,4,3,2,2}, \\
\sigma &= \pas{8,7,7,5,4,4,2,1,1}, \\
\mu=\tau &= \pas{4,3,3,1}.
\end{align*}

\begin{figure}
\caption{Application of Theorem~\ref{lem:fulmek2} to the special case $\mu=\tau$, $\lambda_1>\sigma_1$,
$\length{\lambda}=\length{\sigma}+1$ and shift $0$ for all starting and ending points: Consider the configurations that
can arise by recolouring the bicoloured path $b$ ending in $q=\pas{\lambda_1-1,N}$. The uppermost
picture shows the configuration of the starting and ending points of $\lambda/\mu$ (drawn as
white circles) and
$\sigma/\tau$ (drawn as black circles), where $\mu=\tau=\pas{4,3,3,1}$. The point $q=\pas{\lambda_1-1,N}$ is marked by a
white rectangle.
\newline
The three pictures below show the three possible configurations arising by the recolouring
of $b$; the other ending point of $b$ is marked
by a white rectangle.
\newline
From Theorem~\ref{lem:fulmek2} we obtain the Schur function identity
$$
\schurf_{\lambda/\mu}\cdot\schurf_{\sigma/\tau}=
\schurf_{\indexthis{\lambda}{0}/\mu}\cdot\schurf_{\indexthis{\sigma}{0}/\mu}+
\schurf_{\indexthis{\lambda}{1}/\mu}\cdot\schurf_{\indexthis{\sigma}{1}/\mu}+
\schurf_{\indexthis{\lambda}{2}/\mu}\cdot\schurf_{\indexthis{\sigma}{2}/\mu}.
$$
}
\label{fig:SaponovLatticePaths}
\input graphics/SaponovLatticePaths
\end{figure}

See the upper picture in \figref{fig:SaponovLatticePaths}
for an illustration.

Assume that there are $k>1$ black points $q_1,\dots,q_k$ and denote the
shapes corresponding to recolouring $q$ and $q_i$, $i=0,1,\dots,k$, by
$\indexthis{\lambda}{i}/\mu$ and $\indexthis{\sigma}{i}/\mu$, respectively.
Then by Theorem~\ref{lem:fulmek2}
we have:
\begin{equation}
\label{eq:saponov-fulmek}
\schurf_{\lambda/\mu}\cdot\schurf_{\sigma/\mu} =
\schurf_{\indexthis{\lambda}{0}/\mu}\cdot\schurf_{\indexthis{\sigma}{0}/\mu} +
\sum_{i=1}^k \schurf_{\indexthis{\lambda}{i}/\mu}\cdot\schurf_{\indexthis{\sigma}{i}/\mu}
\end{equation}

See the three lower pictures in \figref{fig:SaponovLatticePaths}
for an illustration (in this example, $k=2$).

If we choose $\mu=0$, \eqref{eq:saponov-fulmek} amounts precisely to the identity \cite[(3.3)]{gurevichPyatovSaponov:2009}: We simply
have to \EM{translate} our formulation to the language of adding and removing \EM{partial border strips}
to Ferrers diagrams, which was used by Gurevich, Pyatov and Saponov
\cite{gurevichPyatovSaponov:2009}. To get a first idea, have a look at
\figref{fig:Saponov}, which presents the Ferrers diagrams corresponding to the concrete
example of \figref{fig:SaponovLatticePaths}.

\begin{figure}
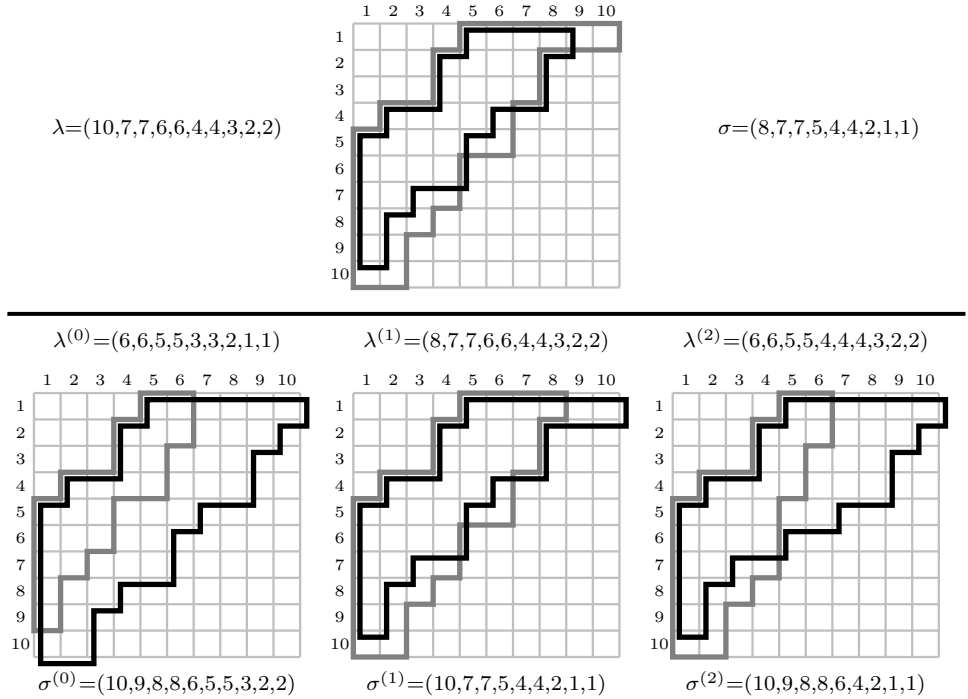

\caption{The skew Ferrers diagrams corresponding to the example in \figref{fig:SaponovLatticePaths}: The upper picture shows the overlay of the
Ferrers diagrams for $\lambda/\mu$ and $\sigma/\mu$ (without recolouring), and the
lower pictures show the overlay of diagrams $\indexthis{\lambda}{i}/\mu$ and
$\indexthis{\sigma}{i}/\mu$,
$i=0,1,2$.
}
\label{fig:Saponov}
\input graphics/Saponov
\end{figure}

Observe that
recolouring starting/ending points may be viewed as a game of inserting/removing points in
the configuration of starting/ending points corresponding to some partition. Instead of
giving a lengthy verbal description, we present in \figref{fig:Recolouring} the
effect of inserting (read the upper picture upwards, from $\sigma$ to $\lambda$) or removing
(read the upper picture downwards, from $\lambda$ to $\sigma$) points in a graphical way.

Reading the upper
part of \figref{fig:Recolouring} \EM{downwards} (i.e., removing the point in position $\lambda_i-i$),
the Ferrers diagram of $\sigma$ is obtained
from the Ferrers diagram of $\lambda$ by a \EM{down--peeling} of a \EM{partial border strip} starting at
row $i$, or, in the language of \cite{gurevichPyatovSaponov:2009}:
$$\sigma=\sappeeldown{\lambda}{i}.$$

The special case of this operation for $i=1$ amounts to the removal of the \EM{complete}
border strip --- in the language of \cite{gurevichPyatovSaponov:2009}:
$$
\sappeel{\lambda}\defeq\sappeeldown{\lambda}{1}.
$$

%

\begin{figure}
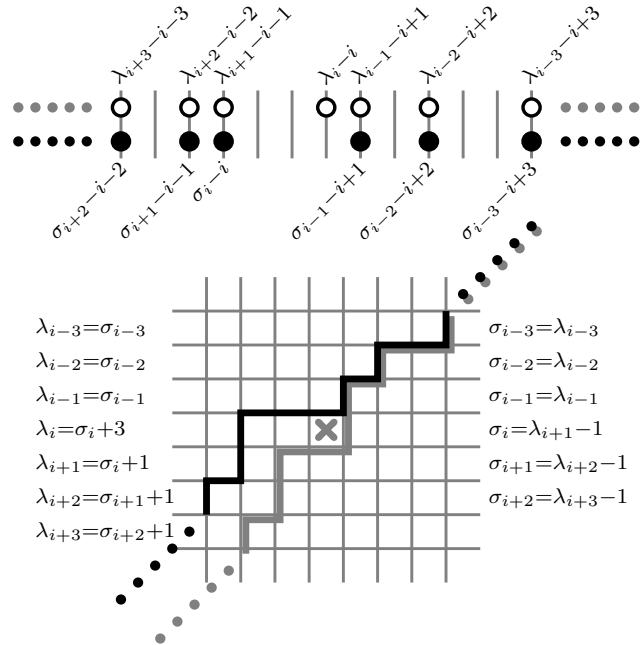

\caption{
The operation of inserting/removing points in the configuration of
starting/ending points can be translated to the language
introduced in \cite{gurevichPyatovSaponov:2009}, i.e., to operations on Ferrers diagrams:
In the lower part of the picture, the grey lines correspond to the Ferrers diagram of $\lambda$, and the
black lines correspond to the Ferrers diagram of $\sigma$. Reading the upper
part of the picture \EM{downwards} (i.e., removing the point in position $\lambda_i-i$),
the Ferrers diagram of $\sigma$ is obtained
from the Ferrers diagram of $\lambda$ by a \EM{down--peeling} of a \EM{partial border strip} starting at
row $i$: $\sigma=\sappeeldown{\lambda}{i}$. The partial border strip is the area between the
black and grey shapes in rows $i,i+1,\dots$: It is clear that such strip begins with the
last box in row $i$ (indicated by a grey "x" in the picture).
}
\label{fig:Recolouring}
\input graphics/Recolouring
\end{figure}

Now observe that a pair of partitions $\pas{\lambda,\sigma}$ fulfilling the
preconditions
for \eqref{eq:saponov-fulmek} can be obtained by \EM{constructing} an appropriate $\sigma$
for a given $\lambda$. This is achieved by applying to the configuration of ending points
corresponding to $\lambda$ a sequence of $k$ removals/insertions of points, followed by
removing
the right--most ending point and inserting a new left--most starting point, see
the upper picture in \figref{fig:SaponovLatticePaths}.

Again, we shall illustrate the simple procedure by pictures
instead of giving a lengthy verbal description.

The first step of this construction is illustrated in \figref{fig:SaponovTranslation}:
The configuration of points corresponding to the partition
$\lambda=\pas{10,7,7,6,6,4,4,3,2,2}$ considered in
\figref{fig:SaponovLatticePaths} is presented in the upper part of the picture.
This configuration is changed by adding a new point at position $7$ first (the
result of this change is presented in the middle part of the picture) and then by
removing the point at position $2$ (the
result of this change is presented in the lower part of the picture). It is obvious
that this amounts to adding a \EM{partial border strip} to the Ferrers diagram
of $\lambda$, which consists of $t_1=2$ boxes in row $r_1=2$ and spans the $m_1=3$ rows
$2,3$ and $4$. Stated in the language introduced in \cite{gurevichPyatovSaponov:2009}, the
picture in the lower row of \figref{fig:SaponovTranslation} shows
$$\sapadd{\lambda}{t_1}{(r_1,m_1)}.$$ 

\begin{figure}
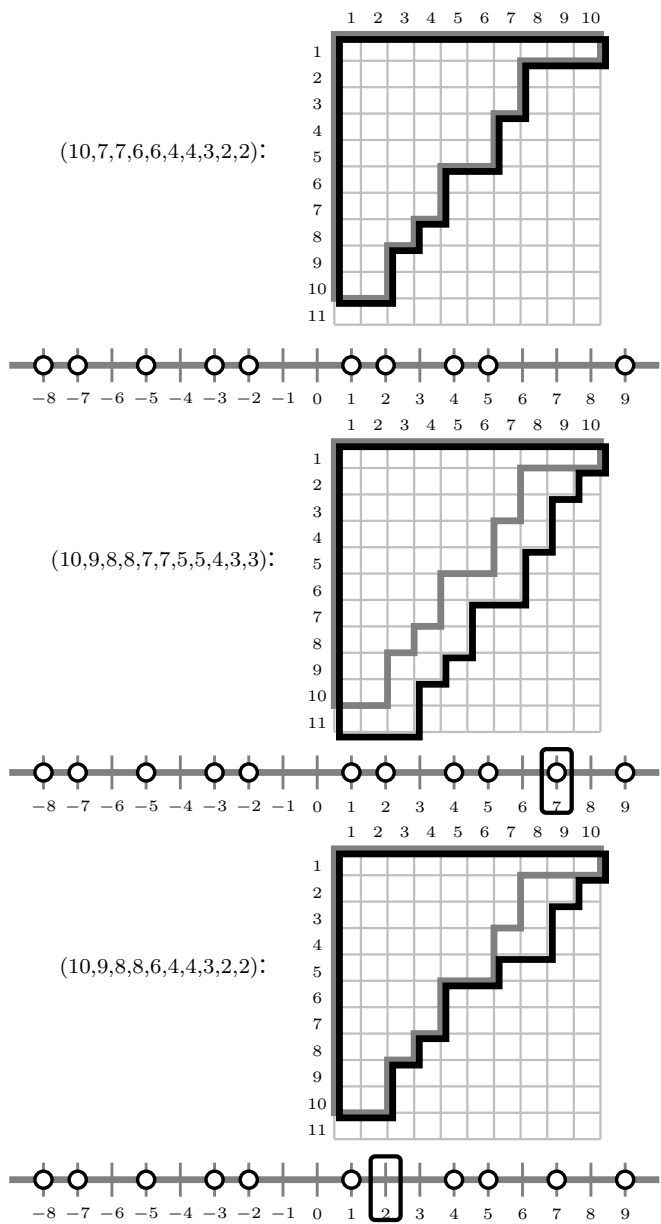

\caption{Adding some point at position $i$ and removing a point at position $j<i$
amounts to adding a \EM{partial border strip}. The pictures show the Ferrers
diagram of the original partition $\lambda$ with grey lines, and the Ferrers
diagrams of the partitions obtained by adding/removing points with black lines.
The corresponding configuration of ending points is depicted unter the Ferrers
diagrams.
}
\label{fig:SaponovTranslation}
\input graphics/SaponovTranslation
\end{figure}

The next step of this construction is to add a point at position $-1$ and to remove the point
at position $-3$. This amounts to adding a partial border strip consisting of $t_2=2$
boxes in row $r_2=6$ and spanning the $m_2=2$ rows $6$ and $7$.
We thus obtain $\nu=\pas{10,9,8,8,6,5,5,3,2,2}$, or, in the language of \cite{gurevichPyatovSaponov:2009}: $\nu=\sapadd{\lambda}{2,1}{(2,3),(6,2)}$,
see the upper picture in \figref{fig:SaponovTranslation2}.

Finally, we remove the rightmost ending point at position $9$: This amounts to removing
a \EM{complete} border strip from the Ferrers diagram of $\nu$, giving the partition
$\sigma=\sappeel{\nu}=\pas{8,7,7,5,4,4,2,1,1}$ of our running example, see the lower picture in \figref{fig:SaponovTranslation2}.

\begin{figure}
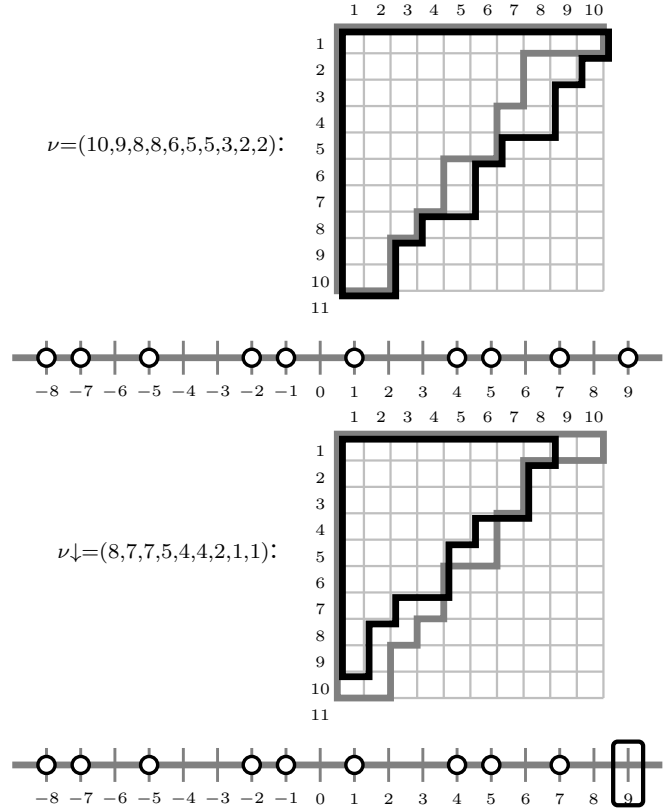

\caption{Removing the point corresponding to $\nu_1$ (marked by a white rectangle in
the lower picture) amounts to removing
the \EM{complete border strip} spanning rows $1, 2,\dots,\length{\nu}$ from
the Ferrers board of $\nu$,
see \figref{fig:Recolouring}.
}
\label{fig:SaponovTranslation2}
\input graphics/SaponovTranslation2
\end{figure}

So we see that the Schur function identity stated in \figref{fig:SaponovLatticePaths}
can be partially translated as:
$$
\schurf_{\lambda/\mu}\cdot\schurf_{\sappeel{\nu}/\tau}=
\schurf_{\sappeel{\lambda}/\mu}\cdot\schurf_{\nu/\mu}+\cdots
$$

\begin{figure}
\caption{Removing the point corresponding to $\lambda_1$ and inserting
a new point at positions $7$ and $-1$, respectively (these points are marked by
white rectangles in the pictures) yield the partitions $\indexthis{\lambda}{1}$ and
$\indexthis{\lambda}{2}$ from \figref{fig:SaponovLatticePaths}, respectively.
In the language of \cite{gurevichPyatovSaponov:2009}, $\indexthis{\lambda}{1}$
is obtained by the \EM{up--peeling} of a partial border strip starting in row $1$ at
the second possible box (marked by a grey ``x'' in the upper picture), and
$\indexthis{\lambda}{2}$
is obtained by the up--peeling of a partial border strip starting in row $5$ at
the first possible box (marked by a grey ``x'' in the upper picture):
$\indexthis{\lambda}{1}=\sappeelup{\lambda}{1,2}$ and
$\indexthis{\lambda}{2}=\sappeelup{\lambda}{5,1}$.
}
\label{fig:SaponovTranslation3}
\input graphics/SaponovTranslation3
\end{figure}

To complete this translation, have a look at \figref{fig:SaponovTranslation3}
and note that the partitions $\indexthis{\lambda}{1}$ and $\indexthis{\lambda}{2}$
(drawn with black lines) are obtained by the original
partition $\lambda$ (drawn with grey lines) by removing a partial border strip
which starts in the box marked with a small ``x'' and extends up to the first row.
Note that for such \EM{up--peeling} of a border strip starting at row $i$, there are
$\lambda_i-\lambda_{i+1}$ possible positions of the starting box: Number them from
left to right, then the up--peeling is uniquely determined by the row number
$i$ and the box number $t$. In 
the language of \cite{gurevichPyatovSaponov:2009}, this is denoted by
$$
\sappeelup{\lambda}{i,t},
$$
i.e., we have
$
\indexthis{\lambda}{1}=\sappeelup{\lambda}{1,2}
$
and
$
\indexthis{\lambda}{2}=\sappeelup{\lambda}{5,1}.
$

Putting all these
observations together, we see that the Schur function identity stated in \figref{fig:SaponovLatticePaths}
can be translated as:
$$
\schurf_{\lambda/\mu}\cdot\schurf_{\sappeel{\nu}/\tau}=
\schurf_{\sappeel{\lambda}/\mu}\cdot\schurf_{\nu/\mu}+
\schurf_{\sappeelup{\lambda}{1,2}/\mu}\cdot\schurf_{\sappeeldown{\nu}{2}/\mu}+
\schurf_{\sappeelup{\lambda}{5,1}/\mu}\cdot\schurf_{\sappeeldown{\nu}{6}/\mu}.
$$

So it is clear that the special case considered in this
section can be stated as follows:

\begin{cor}[Gurevich, Pyatov and Saponov]
Let $\lambda=\pas{\lambda_1,\dots,\lambda_r}$ be a partition. Assume that there
are $k$ indices $2\leq r_1<\dots<r_k\leq r$ such that $\lambda_{r_i}<\lambda_{r_{i-1}}$,
$i=1,\dots, r$.
Choose integers $t_i$ and $m_i$ for $i=1,\dots,k$ subject to the restrictions

\begin{align*}
1 &\leq t_i\leq\lambda_{r_i-1}-\lambda_{r_i}, \\
1 &\leq m_i\leq r_{i+1}-r_i.
\end{align*}

Then we may construct $\nu=\sapadd{\lambda}{t_1,\dots,t_k}{(r_1,m_1)\dots(r_k,m_k)}$, and
we have
$$
\schurf_{\lambda}\cdot\schurf_{\sappeel{\nu}}=
\schurf_{\sappeel{\lambda}}\cdot\schurf_{\nu}+
\sum_{i=1}^k\schurf_{\sappeelup{\lambda}{r_i-1,t_i}}\cdot
	\schurf_{\sappeeldown{\nu}{(r_i)}}.
$$

\end{cor}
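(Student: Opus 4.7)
The plan is to apply Theorem~\ref{lem:fulmek2} to the pair of non--skew partitions $\pas{\lambda,\sappeel{\nu}}$, regarded as skew shapes with $\mu=\tau=\emptyset$ and shift $0$ for all starting and ending points, and then to translate the resulting Schur function identity into the border--strip language of \cite{gurevichPyatovSaponov:2009} via the dictionary developed in Figures~\ref{fig:Recolouring}, \ref{fig:SaponovTranslation} and \ref{fig:SaponovTranslation3}.

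First, I would verify the alternation hypothesis of Theorem~\ref{lem:fulmek2}. Writing $\sigma\defeq\sappeel{\nu}$ and using $\sigma_j=\nu_{j+1}-1$, the white ending points are $\pas{\lambda_i-i,N}$ for $i=1,\dots,r$ and the black ending points are $\pas{\nu_i-i,N}$ for $i=2,\dots,r$; the starting points are $\pas{-i,1}$ for $i=1,\dots,r$ (white) and $i=1,\dots,r-1$ (black). Outside the row--ranges $\setof{r_i,\dots,r_i+m_i-1}$ of the partial border strips we have $\nu_j=\lambda_j$, so the row--$j$ ending point is doubled. Inside the $i$--th strip range, the restrictions $1\leq t_i\leq\lambda_{r_i-1}-\lambda_{r_i}$ and $1\leq m_i\leq r_{i+1}-r_i$ guarantee (by an inspection analogous to Figures~\ref{fig:SaponovTranslation} and \ref{fig:SaponovTranslation2}) that precisely one coloured outward black point appears at $\nu_{r_i}-r_i$ and one coloured inward white point appears at $\lambda_{r_i+m_i-1}-(r_i+m_i-1)$. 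Together with the rightmost white ending point $q=\pas{\lambda_1-1,N}$ and the leftmost white starting point $q_0=\pas{-r,1}$, this gives $k+1$ inward and $k+1$ outward coloured points whose circular order alternates in, out, in, out, \dots, in, out.

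Next, I would choose $S=\setof{q}$ and apply Theorem~\ref{lem:fulmek2}. By Observations~\ref{obs:arbitrary-point} and \ref{obs:different-orientation}, the bicoloured path starting at $q$ terminates at one of the $k+1$ outward coloured points: either at the outward black ending $\pas{\nu_{r_i}-r_i,N}$ for some $i\in\setof{1,\dots,k}$, or at $q_0$. Recolouring $q$ with $q_0$ removes the rightmost white ending and the leftmost white starting, producing the white shape $\sappeel{\lambda}$, and simultaneously adjoins $q$ and $q_0$ to the black side, producing the black shape $\nu$: this contributes the term $\schurf_{\sappeel{\lambda}}\cdot\schurf_\nu$. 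Recolouring $q$ with the $i$--th outward black ending point replaces $\lambda_1-1$ by $\lambda_{r_i}+t_i-r_i$ in the sorted list of white ending points and performs the symmetric swap on the black side.

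The main obstacle is to match these point--level swaps with the border--strip operations $\sappeelup{\lambda}{r_i-1,t_i}$ and $\sappeeldown{\nu}{(r_i)}$ of \cite{gurevichPyatovSaponov:2009}. Reading off the new white partition $\lambda'$ from its ending points (if $p_1>p_2>\cdots$ are the sorted white endings, then $\lambda'_j=p_j+j$), one obtains $\lambda'_j=\lambda_{j+1}-1$ for $j<r_i-1$, $\lambda'_{r_i-1}=\lambda_{r_i}+t_i-1$, and $\lambda'_j=\lambda_j$ for $j\geq r_i$; comparison with Figure~\ref{fig:SaponovTranslation3} identifies $\lambda'$ as the up--peeling of a partial border strip of $\lambda$ starting in row $r_i-1$ at the $t_i$--th admissible position, that is, $\lambda'=\sappeelup{\lambda}{r_i-1,t_i}$. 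An entirely analogous computation identifies the new black shape with $\sappeeldown{\nu}{(r_i)}$. The disjointness $m_i\leq r_{i+1}-r_i$ of the strip ranges makes the modifications at different $i$ independent, so the general case reduces to the single--strip check, and summing the $k+1$ contributions yields the stated identity.
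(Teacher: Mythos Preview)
Your proposal is correct and follows precisely the route the paper takes: apply Theorem~\ref{lem:fulmek2} to the pair $\pas{\lambda,\sappeel{\nu}}$ with $\mu=\tau=\emptyset$ and $S=\setof{q}$, verify the alternation of the \coc, and then translate the $k+1$ possible recolourings into the border--strip operations $\sappeel{\;\cdot\;}$, $\sappeelup{\;\cdot\;}{\;\cdot\;}$ and $\sappeeldown{\;\cdot\;}{\;\cdot\;}$ of \cite{gurevichPyatovSaponov:2009}. Your explicit computation of the coloured points (using $\nu_{j+1}-(j+1)=\lambda_j-j$ inside each strip range, so that only $\nu_{r_i}-r_i$ and $\lambda_{r_i+m_i-1}-(r_i+m_i-1)$ survive as coloured) and of the resulting partitions $\lambda'_j$ is in fact more detailed than the paper's own treatment, which argues largely by example and pictures; but the underlying argument is identical.
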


\bibliography{paper}

\end{document}